\newtheorem{theorem}{Theorem}[section]
\newtheorem{lemma}[theorem]{Lemma}
\newtheorem{proposition}[theorem]{Proposition}
\newtheorem{corollary}[theorem]{Corollary}
\theoremstyle{definition}                               
\newtheorem*{example*}{Example}
\theoremstyle{remark} 
\newtheorem*{remark*}{Remark}
\numberwithin{equation}{section}
\DeclareMathOperator{\diag}{diag}
\def\R{\mathcal{R}}
\def\C{\mathcal{C}}
\def\C{\mathcal{C}}
\def\c{\mathbb{C}}
\begin{document}
\title{Equivariant maps between Calogero-Moser spaces}
\author{George Wilson}
\address{Mathematical Institute, 24--29 St Giles, Oxford OX1 3LB, UK}
\email{wilsong@maths.ox.ac.uk}
%
%
\begin{abstract}
We add a last refinement to the results of \cite{BW1} and 
\cite{BW2} relating ideal classes of the Weyl algebra to 
the Calogero-Moser varieties: we show that the bijection constructed 
in those papers is {\it uniquely determined} by its equivariance with 
respect to the automorphism group of the Weyl algebra.
\end{abstract}
\maketitle
\section{Introduction and statement of results}

Let $\, A \,$ be the Weyl algebra 
$\, \c\langle x,y \rangle /(xy - yx -1) \,$, and 
let $\, \R \,$ be the space of noncyclic right ideal classes of $\, A \,$ 
(that is, isomorphism classes of noncyclic finitely generated 
rank 1 torsion-free right $\,A$-modules). Let 
$\, \C \,$ be the disjoint union of the 
{\it Calogero-Moser spaces} $\, \C_n \,$ ($\, n \geq 1 \,$): 
we recall that $\, \C_n \,$ is the space 
of all simultaneous conjugacy classes of pairs of $\, n \times n \,$ 
matrices $\, (X,Y) \,$ such that $\, [X,Y] + I \,$ has rank $1 \,$.  
It is a smooth irreducible affine variety of dimension $\, 2n \,$ 
(see \cite{W}).
For simplicity, in what follows  we shall use the same notation 
$\, (X,Y) \,$ for 
a pair of matrices and for the corresponding point of $\, \C_n \,$. 
Let $\,G \,$ be the group of $\c$-automorphisms of 
$\, A \,$, and let $\, \Gamma \,$ and $\, \Gamma' \,$ be the isotropy 
groups of the generators $\, y \,$ and $\, x \,$ of $\, A \,$. Thus  
$\, \Gamma \,$ consists of all automorphisms of the form
$$
\Phi_p(x) = x - p(y) \,, \quad \Phi_p(y) = y 
$$
where $\, p \,$ is a polynomial; and similarly $\, \Gamma' \,$ consists of 
all automorphisms of the form
$$
\Psi_q(x) = x \,, \quad \Psi_q(y) = y - q(x) \,
$$
where $\, q \,$ is a polynomial.  According to Dixmier (see \cite{D}), 
$\, G \,$ is generated by the subgroups $\, \Gamma \,$ and $\, \Gamma' \,$. 
There is an obvious action of $\, G \,$ on $\, \R \,$; we let $\, G \,$ act 
on $\, \C \,$ by the formulae
\begin{equation}
\Phi_p(X,Y) = (X + p(Y),\, Y) \,, \quad 
\Psi_q(X,Y) = (X,\, Y + q(X)) \,.
\end{equation}
According to \cite{BW1} this $ \, G $-action is {\it transitive} 
on each space $\, \C_n \,$. The main result of \cite{BW1} was 
the following.
\begin{theorem}
\label{bw}
There is a bijection between the spaces $\, \R \,$ and $\, \C \,$ which 
is equivariant with respect to the above actions of $\, G \,$.
\end{theorem}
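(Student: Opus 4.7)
The plan is to construct the bijection $\beta: \C \to \R$ explicitly via the rational Baker function attached to a Calogero-Moser pair, match it with the adelic Grassmannian of \cite{W}, and reduce the equivariance statement to a check on the generators of $G$ using Dixmier's theorem.

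For the construction, given $(X,Y) \in \C_n$ I use the rank-one relation $[X,Y] + I = vw^T$ to form the rational Baker function
\[
\psi(x,\lambda) = \bigl(1 - w^T(\lambda I - Y)^{-1}(xI - X)^{-1}v\bigr)\, e^{x\lambda}.
\]
Identifying $y$ with $\d_x$ so that $A$ becomes the algebra of polynomial differential operators in $x$, I associate to $\psi$ the right ideal
\[
M(X,Y) := \{ P \in A : P \cdot \psi \in \c[\lambda] \cdot \psi \},
\]
and set $\beta(X,Y) := [M(X,Y)] \in \R$. Well-definedness on conjugacy classes, and independence of the scaling of $(v,w)$, are short verifications. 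To show that $\beta$ is a bijection I appeal to \cite{W}, which identifies each $\C_n$ with the stratum $\Grad_n$ of subspaces $V \subset \c[x]$ of codimension $n$ arising as the $\c[\lambda]$-span of coefficients of $\psi$; on the other side, a Cannings--Holland style description realises each class in $\R$ as coming from such a $V$ via $V \mapsto \{P \in A : P\cdot \c[x] \subseteq V\}$, and aligning these two realisations then produces the bijection.

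For equivariance, Dixmier's generation $G = \langle \Gamma, \Gamma'\rangle$ reduces the problem to checking the two generating families. Under $\Phi_p$, the substitution $X \mapsto X + p(Y)$ multiplies $\psi$ by an exponential factor in $\lambda$ which, after translating through $y \mapsto \d_x$, matches the Weyl-algebra automorphism $x \mapsto x - p(y)$ on the level of the ideal $M$; the case of $\Psi_q$ follows by applying the Fourier involution of $A$ exchanging $x$ and $y$, which swaps $\Gamma$ with $\Gamma'$ and intertwines $(X,Y)$ with $(-Y,X)$ on the Calogero-Moser side. A small amount of bookkeeping with the resolvent identity is all that is needed for each case.

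The main obstacle is the bijectivity, specifically surjectivity onto $\R$: showing that every noncyclic rank-$1$ torsion-free right $A$-module arises from a finite-codimension Baker-form subspace of $\c[x]$ requires the full Cannings--Holland classification of right ideals of $A$ and its identification, stratum by stratum, with the Calogero-Moser varieties via $\Grad$. Aligning these two classifications -- so that the ``rank'' of an ideal class matches the integer $n$ indexing $\C_n$ -- is the heart of the proof and the point where the deepest input is required.
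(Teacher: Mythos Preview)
The paper does not prove Theorem~\ref{bw}: it is quoted as the main result of \cite{BW1}, with the remark that the same bijection was reconstructed by a different method in \cite{BW2}. There is therefore no proof in the present paper against which to compare your sketch.

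Your outline is essentially the route of \cite{W} and \cite{BW1} (Baker function, adelic Grassmannian, Cannings--Holland correspondence), so at the architectural level it matches the literature from which the theorem is taken. Two points would need tightening in a full write-up. First, the set $\{P \in A : P\cdot\psi \in \c[\lambda]\cdot\psi\}$ is not a right ideal of $A$: already for the trivial Baker function $\psi = e^{x\lambda}$ this set is $\c[\d_x]$, which is not closed under right multiplication by $x$. The ideal you actually want is the Cannings--Holland one $\{P : P\cdot\c[x]\subseteq V\}$ that you write a few lines later, and you should use that definition throughout rather than the eigenfunction condition. Second, the assertion that $X \mapsto X + p(Y)$ merely multiplies $\psi$ by an exponential in $\lambda$ is not literally correct, since $(xI - X - p(Y))^{-1}$ does not differ from $(xI - X)^{-1}$ by a scalar factor; in \cite{BW1} the equivariance under $\Gamma$ is established on the Grassmannian side rather than by a direct manipulation of $\psi$.
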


This bijection constructed in \cite{BW1} was obtained in a quite different 
way in \cite{BW2}.  The proof in \cite{BW2} that the two constructions 
agree used the fact that equivariance was known in both cases; thus 
to prove that the bijections coincide, it was enough to check one point 
in each $\, G$-orbit, that is, in each space $\, \C_n \,$.  The result 
to be proved in the present note is that even this (not difficult) check 
was unnecessary.
\begin{theorem}
\label{preT}
There is only one $\, G$-equivariant bijection between the spaces 
$\, \R \,$ and $\, \C \,$.
\end{theorem}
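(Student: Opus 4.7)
The plan is to reduce Theorem~\ref{preT} to proving that any $G$-equivariant bijection $\sigma: \C \to \C$ must be the identity. Given two bijections $\beta_1, \beta_2$ as in the statement, the composite $\sigma := \beta_2 \circ \beta_1^{-1}$ is such a self-bijection, so $\sigma = \mathrm{id}_{\C}$ yields $\beta_1 = \beta_2$. The engine of what follows is that $G$-equivariance forces the equality of stabilizers $G_{\sigma(c)} = G_c$ at every $c \in \C$, so that $\sigma(c)$ must lie in the fixed-point set $\C^{G_c}$.

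Stage one: I would first show that $\sigma$ preserves each component $\C_n$. Since $\sigma$ sends the $G$-orbit $\C_n$ onto some $G$-orbit $\C_{f(n)}$, I need a $G$-equivariant invariant that separates the $\C_n$. A natural choice is the fixed locus of the scaling subgroup $T = \{\tau_t : t \in \c^*\} \subset G$, with $\tau_t(x) = tx$, $\tau_t(y) = t^{-1}y$: on $\C_n$ this is finite of cardinality $p(n)$ (the number of partitions of $n$), and cardinality is preserved by any $T$-equivariant bijection. Strict monotonicity of $p$ for $n \geq 1$ then forces $f = \mathrm{id}$.

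Stage two: for each $n$, I would show $\sigma|_{\C_n} = \mathrm{id}_{\C_n}$. By the transitivity of $G$ on $\C_n$, it suffices to pin down $\sigma(c_n) = c_n$ at a single basepoint $c_n$; equivalently, to exhibit $c_n$ with $\C_n^{G_{c_n}} = \{c_n\}$. For $n = 1$, the origin $(0,0) \in \C_1 = \c^2$ works: any $(X', Y')$ fixed by $G_{(0,0)}$ must satisfy $\Phi_p(X', Y') = (X', Y')$ for every $p$ with $p(0) = 0$, forcing $p(Y') = 0$ for all such $p$ and hence $Y' = 0$, and similarly $X' = 0$. For larger $n$, I would attempt the same strategy at a distinguished torus-fixed basepoint of $\C_n$.

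The hard part will be stage two when $n \geq 2$. A short computation already at $n = 2$ shows that the easily identified elements of $G_{c_n}$ — namely the torus $T$ together with $\Gamma \cap G_{c_n}$ and $\Gamma' \cap G_{c_n}$ — do not isolate $c_n$: they leave open another torus-fixed point, sharing all of these intersections, as a competing candidate for $\sigma(c_n)$. The crux of the proof must therefore produce further elements of $G_{c_n}$, built as suitable words in $\Gamma \cup \Gamma'$ via Dixmier's theorem that $G = \langle \Gamma, \Gamma' \rangle$, whose joint fixed locus in $\C_n$ reduces to $\{c_n\}$.
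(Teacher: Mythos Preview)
Your plan is sound and lines up closely with the paper's argument; in particular, your diagnosis of stage two is exactly right. The paper reduces (as you do) to showing that any $G$-equivariant self-map of $\C$ is the identity, fixes the same scaling-invariant basepoint $(X_0,Y_0)\in\C_n$, and then does precisely what you say must be done: it manufactures words in $\Gamma\cup\Gamma'$ of the form $\Phi_{-p}\Psi_\chi\Phi_p$ (with $p(t)=t^{\,n-1}$ or $t^{\,n-2}$ and $\chi$ the characteristic polynomial of $X_0+p(Y_0)$) lying in $G_{(X_0,Y_0)}$, and checks by hand that these move every other nilpotent candidate $(X(\boldsymbol a),Y_0)$. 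So the ``crux'' you identify is exactly the content of the paper's Section~5, and your $n=2$ observation about a second torus-fixed competitor anticipates the paper's point $(X_1,Y_0)$.

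The one place you diverge is stage one. The paper does not separate the $\C_n$ by counting $T$-fixed points; instead it rules out $G$-maps $\C_n\to\C_m$ for $n<m$ by an eigenvalue argument (Lemma~\ref{diag}) and for $n>m$ by the same $\Phi_{-p}\Psi_\chi\Phi_p$ trick. Your counting argument is slicker for the bijection statement you actually need, but it buys less: it uses that $\sigma$ is a bijection (so induces a bijection on $T$-fixed loci), whereas the paper's arguments prove the stronger Theorem~\ref{T}(ii) that no $G$-map whatsoever exists between distinct $\C_n$. One caution: your claim $|\C_n^{\,T}|=p(n)$ is correct but deserves a citation or proof (the $n$ points $(X(\boldsymbol a),Y_0)$ listed in the paper are only those with $Y$ \emph{regular} nilpotent; the remaining partitions give further fixed points). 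For your purposes, even the cruder fact that $|\C_n^{\,T}|$ is finite and strictly increasing in $n$ would suffice.
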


Clearly, it is equivalent to show that there is no nontrivial 
$\, G$-equivariant bijection from $\, \C \,$ to itself. We shall show  
a little more, namely, that (apart from the identity) 
there is no $\, G$-equivariant map (for short: $G$-map) at all 
from $\, \C \,$ to itself.  Since a $\, G$-map 
must take each orbit onto another orbit, that amounts to the following 
assertion.
\begin{theorem}
\label{T}
{\rm (i)}  For any $\, n \geq 1 \,$, 
let $\, f : \C_n \to \C_n \,$ be a $\, G$-map.  
Then $\, f \,$ is the identity. \\
{\rm (ii)}  For $\, n \neq m \,$ 
there is no $\, G$-map from 
$\, \C_n \,$ to $\, \C_m \,$.
\end{theorem}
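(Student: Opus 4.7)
The plan is to analyse $F$ in the standard Calogero--Moser coordinates on the dense open subset $U_n \subseteq \C_n$ where $Y$ has $n$ distinct eigenvalues. After conjugation one may take $Y = \diag(p_1, \dots, p_n)$ with $X_{ii} = q_i$ and $X_{ij} = 1/(p_i - p_j)$ for $i \neq j$, identifying $U_n$ with the configuration space of $n$ distinct points $(q_i, p_i)$ in $\c^2$ modulo the $S_n$-action. In these coordinates the generators of $G$ act by
\[
\Phi_f \colon (q_i, p_i) \mapsto (q_i + f(p_i),\, p_i), \qquad
\Psi_g \colon (q_i, p_i) \mapsto (q_i,\, p_i + g(q_i)),
\]
the second formula following from the first via the $SL_2(\c) \subseteq G$ involution exchanging $X$ and $Y$.

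Given a $G$-map $F \colon \C_n \to \C_m$, set $V := F^{-1}(U_m) \cap U_n$ and on $V$ write $F(\{(q_i, p_i)\}) = \{(Q_j, P_j)\}_{j=1}^m$ in the analogous coordinates on $U_m$. $\Gamma$-equivariance forces each $P_j$ to depend only on $\{p_i\}$ and gives $Q_j(\{q_i + f(p_i)\}) = Q_j(\{q_i\}) + f(P_j)$ for every polynomial $f$, while $\Gamma'$-equivariance forces each $Q_j$ to depend only on $\{q_i\}$ and gives $P_j(\{p_i + g(q_i)\}) = P_j(\{p_i\}) + g(Q_j)$ for every $g$. Specialising $g(t) = t$ yields the Cauchy equation
\[
P_j(\{p_i + q_i\}) = P_j(\{p_i\}) + Q_j(\{q_i\}),
\]
forcing $P_j = \sum_i a_{ji} p_i + c_j$ and $Q_j = \sum_i a_{ji} q_i$ with matching coefficients; specialising $g(t) = t^2$ then gives $\sum_i a_{ji} q_i^2 = (\sum_i a_{ji} q_i)^2$, so each $a_{ji} \in \{0, 1\}$ with at most one nonzero per row $j$. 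The zero-row case is excluded by taking $f$ to be a nonzero constant in the $\Gamma$-identity, and another use of that identity forces $c_j = 0$. Thus $(Q_j, P_j) = (q_{\sigma(j)}, p_{\sigma(j)})$ for a function $\sigma \colon \{1, \dots, m\} \to \{1, \dots, n\}$, and the $S_n$-symmetry of the input multiset forces all fibres of $\sigma$ to have a common cardinality $k$; hence $m = nk$ and $F|_V$ is the ``copy-$k$'' map $\{(q_i, p_i)\} \mapsto k \cdot \{(q_i, p_i)\}$.

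When $k = 1$, forcing $m = n$, $F|_V$ is the identity, and by the $G$-transitivity of \cite{BW1} this extends to the identity on all of $\C_n$, proving (i) and the case $m = n$ of (ii). When $k \geq 2$, the image of the copy-$k$ map lies entirely in $\C_m \setminus U_m$ (the $P_j$'s now have multiplicity $k$), so $F(V) \subseteq U_m \cap (\C_m \setminus U_m) = \emptyset$ and hence $V = \emptyset$; to conclude one observes that $F$ must be surjective by $G$-transitivity on $\C_m$, and by exhibiting regularity of $F$ (which follows from $F$ being equivariant under each of the algebraic subgroups $\Gamma_d \subseteq \Gamma$ of bounded-degree automorphisms acting algebraically on both $\C_n$ and $\C_m$), the preimage $F^{-1}(U_m)$ is open and dense, so its intersection with the dense open $U_n$ is nonempty, contradicting $V = \emptyset$. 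This handles all $m \neq n$.

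The principal technical obstacle I anticipate is establishing regularity of $F$ cleanly so the closing contradiction applies; once this is in hand, the functional-equation analysis and the rest of the argument proceed routinely.
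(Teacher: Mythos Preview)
Your argument has a genuine gap at its very first step: the claimed formula
\[
\Psi_g \colon (q_i,p_i)\ \longmapsto\ (q_i,\,p_i+g(q_i))
\]
for the $\Gamma'$-action in the chart $U_n$ (where $Y=\diag(p_1,\dots,p_n)$ and $X_{ii}=q_i$, $X_{ij}=1/(p_i-p_j)$) is \emph{false}. The $SL_2$-involution you invoke does conjugate $\Gamma$ to $\Gamma'$, but it simultaneously carries the chart in which $Y$ is diagonal to the chart in which $X$ is diagonal; it does not give the $\Gamma'$-action \emph{in the same chart}. Concretely, $\Psi_g$ sends $(X,Y)$ to $(X,\,Y+g(X))$, and to return to the normal form one must diagonalize $Y+g(X)$; the conjugation required for this changes the diagonal entries of $X$, so both the $p_i$ \emph{and} the $q_i$ move, and not by the free-particle rule. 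For $n=2$, with $p_1=1$, $p_2=-1$ and $g(t)=t$, the new $Y$-matrix $Y+X$ has eigenvalue product $(1+q_1)(q_2-1)+\tfrac14$, whereas your formula would predict $(1+q_1)(q_2-1)$. In fact the $\Psi_{ct}$-flow in these coordinates is precisely the (nontrivial) Calogero--Moser flow with the interaction term $\sum_{i<j}(p_i-p_j)^{-2}$ present, not the free flow.

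Everything downstream rests on this formula: the assertion that each $Q_j$ depends only on $\{q_i\}$, the Cauchy-type identity $P_j(\{p_i+q_i\})=P_j(\{p_i\})+Q_j(\{q_i\})$, and hence the linearity and the ``copy-$k$'' conclusion all collapse once the $\Gamma'$-action is seen to mix the coordinates. (Your $\Gamma$-equivariance deductions, by contrast, are correct, since $\Phi_f$ really does act by $(q_i,p_i)\mapsto(q_i+f(p_i),p_i)$ in this chart.)

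For comparison, the paper's proof avoids the open chart $U_n$ altogether. It works instead at the nilpotent base-point $(X_0,Y_0)$, using the scaling torus $R\subset G$ to force $f(X_0,Y_0)=(P,Q)$ with $P,Q$ nilpotent; this reduces the problem to a finite check, which is then settled by exhibiting explicit isotropy elements of the shape $\Phi_{-p}\Psi_\chi\Phi_p$ (with $\chi$ a characteristic polynomial) that fix $(X_0,Y_0)$ but move the other candidate nilpotent pairs.
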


Since $\, \C_n \,$ and 
the action of $\, G \,$ on it are defined by simple formulae involving 
matrices, the proof of Theorem~\ref{T} 
is just an exercise in linear algebra. Quite 
possibly there is a simpler solution to the exercise than the one given below.

The first part of Theorem~\ref{T} is equivalent to the statement 
that the isotropy group of 
any point of $\, \C \,$ (or $\, \R \,$) coincides with its normalizer in 
$\, G \,$ (see section~\ref{trivial} below); in particular, these isotropy 
groups are not normal in $\, G \,$, confirming a suspicion of Stafford 
(see \cite{St}, p.~636).  
Stafford's conjecture seems to have been the motivation for Kouakou's 
work \cite{K}, which contains a result equivalent to ours. 
The proof in \cite{K} looks quite different from the present one, because 
Kouakou does not use the spaces $\, \C_n \,$, but rather the alternative 
description of $\,\R \,$ (due to Cannings and Holland, see \cite{CH}) as the 
adelic Grassmannian of \cite{W}. I have not entirely succeeded in following 
the details of \cite{K}; in any case, it seems  
worthwhile to make available the independent verification of the result 
offered here.

\begin{remark*}
We have excluded from $\, \R \,$ the cyclic ideal class, corresponding to 
the Calogero-Moser space $\, \C_0 \,$ (which is a point).  
The reason is very trivial: since there is 
always a map from any space to a point, part (ii) of Theorem~\ref{T} would 
be false if we included $\, \C_0 \,$. However, Theorem~\ref{preT} would 
still be true.
\end{remark*}

\section{Proof of Theorem~\ref{T} in the case $\, n < m \,$}

If we accept (cf.\ \cite{BW1}, section 11) 
that the $\, \C_n \,$ are homogeneous spaces for the 
(infinite-dimensional) {\it algebraic} group $\, G \,$, then 
Theorem~\ref{T} becomes   
obvious in the case  $\, n < m \,$.  Indeed, any $\,G$-map 
from $\, \C_n \,$ to $\, \C_m \,$ would have to be a 
surjective map of {\it algebraic varieties}, which is clearly impossible  
if $\, n < m \,$, because then 
$\, \C_m \,$ has greater dimension ($2m$) than $\, \C_n \,$.  For 
readers who are not convinced by this argument, we offer a more elementary one, 
based on the following lemma.
\begin{lemma}
\label{diag}
Let $\, f : \C_n \to \C_m \,$ be a $\, G$-map.  
Suppose that $\, f(X,Y) = (P,Q) \,$, and that 
$\, P \,$ is diagonalizable. Then every eigenvalue of $\, P \,$ 
is an eigenvalue of $\, X \,$. 
\end{lemma}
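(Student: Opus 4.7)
The plan is to exploit the isotropy group of $(X,Y)$ in $G$ and push the resulting constraint onto $(P,Q)$ via $G$-equivariance of $f$. The key observation is that for any polynomial $q(t)$ satisfying $q(X)=0$, the automorphism $\Psi_q$ acts trivially on $(X,Y)$:
\[
\Psi_q(X,Y)=(X,\,Y+q(X))=(X,Y).
\]
Since $f$ is a $G$-map, $\Psi_q$ must also fix $f(X,Y)=(P,Q)$; reading off the second coordinate of $\Psi_q(P,Q)=(P,\,Q+q(P))$ forces $q(P)=0$.

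I would then apply this with $q$ equal to the characteristic polynomial of $X$. Cayley--Hamilton gives $q(X)=0$, so by the previous step $q(P)=0$ as well. Hence the minimal polynomial of $P$ divides $q$, and every eigenvalue of $P$ is a root of $q$, that is, an eigenvalue of $X$.

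I do not anticipate any real obstacle: the lemma collapses to recognising that $\{\Psi_q : q(X)=0\}$ sits inside the isotropy group of $(X,Y)$ and feeding in the single polynomial supplied by Cayley--Hamilton. It is worth noting that the diagonalisability hypothesis on $P$ plays no role in this argument; presumably it is included because it is what is required in the subsequent applications of the lemma.
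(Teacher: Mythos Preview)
Your argument has a genuine gap at the step ``reading off the second coordinate of $\Psi_q(P,Q)=(P,\,Q+q(P))$ forces $q(P)=0$''. Recall that a point of $\mathcal{C}_m$ is a \emph{conjugacy class} of pairs, not a pair of matrices. Thus the statement that $\Psi_q$ fixes $(P,Q)$ means only that there exists an invertible matrix $A$ with
\[
APA^{-1}=P \quad\text{and}\quad AQA^{-1}=Q+q(P),
\]
and from this one cannot conclude $q(P)=0$ in general. (For instance, taking traces only yields $\operatorname{tr}q(P)=0$.) This is precisely why the diagonalisability hypothesis is \emph{not} superfluous: the paper uses it, together with the fact (from \cite{W}, Proposition~1.10) that a diagonalisable $P$ in $\mathcal{C}_m$ has \emph{distinct} eigenvalues, to deduce that $A$ must itself be diagonal. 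Comparing diagonal entries of $AQA^{-1}=Q+\chi(P)$ then gives $\chi(p_i)=0$ for each eigenvalue $p_i$, whence $\chi(P)=0$.

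So your overall strategy---exploit that $\Psi_\chi$ lies in the isotropy of $(X,Y)$ and push this through $f$---is exactly the paper's, but you have skipped the one nontrivial step, and in doing so discarded the hypothesis that makes that step work. The fix is to reinstate the diagonalisability of $P$ and argue as above about the centraliser of $P$.
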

\begin{proof}
Let $\, \chi \,$ be the minimum polynomial of $\, X \,$: then in 
$\, \C_m \,$ we have
$$
(P,Q) = f(X,Y) = f(X,Y + \chi(X)) = (P, Q + \chi(P))
$$
(where the last step used the fact that $\, f \,$ has to commute 
with the action of $\, \Psi_{\chi} \in G \,$).  
That means that there is an invertible matrix $\, A \,$ such that 
$$
APA^{-1} = P \text{\ \ and \ } AQA^{-1} = Q + \chi(P) \ .
$$
We may assume that $\, P = \diag(p_1, \ldots, p_m) \,$ is diagonal. 
Then since the $\, p_i \,$ are distinct (see \cite{W}, Proposition 1.10), 
$\, A \,$ is diagonal too, so 
taking the diagonal entries in the last equation gives
$\, q_{ii} = q_{ii} + \chi({p_i}) \,$, whence 
$\, \chi({p_i}) = 0 \text{\, for all \,} i \,$.
Thus $\, \chi(P) = 0 \,$, so the 
minimum polynomial of $\, P \,$ divides $\,\chi \,$.  The lemma follows.
\end{proof}
\begin{corollary}
If $\, n < m \,$ there is no $\, G$-map 
$\, f : \C_n \to \C_m \,$.
\end{corollary}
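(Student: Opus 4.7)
The plan is to combine Lemma~\ref{diag} with transitivity of the $G$-action to derive a simple counting contradiction on numbers of eigenvalues.

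First I would observe that a $G$-equivariant map $f:\C_n\to\C_m$ is automatically surjective. Indeed, the image $f(\C_n)$ is a nonempty $G$-invariant subset of $\C_m$, and since $G$ acts transitively on $\C_m$ (by the main result of \cite{BW1} quoted in the introduction), we must have $f(\C_n)=\C_m$. In particular, $f$ hits points whose first coordinate is of any form we like.

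Next I would exhibit a point $(P,Q)\in\C_m$ with $P$ diagonalizable (for instance with $P=\diag(1,2,\dots,m)$ and $Q$ chosen so that $[P,Q]+I$ has rank one, which is the standard Calogero--Moser starting point). By surjectivity, choose any $(X,Y)\in\C_n$ with $f(X,Y)=(P,Q)$. Lemma~\ref{diag} now applies and tells us that every eigenvalue of $P$ is an eigenvalue of $X$.

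Finally I would close by a dimension count on spectra. By \cite{W}, Proposition 1.10 (the same fact invoked in the proof of Lemma~\ref{diag}), the eigenvalues of the diagonalizable matrix $P$ are automatically pairwise distinct, so $P$ has $m$ distinct eigenvalues. But $X$ is an $n\times n$ matrix and so has at most $n$ distinct eigenvalues. Hence $m\le n$, contradicting the hypothesis $n<m$. There is no real obstacle here: Lemma~\ref{diag} has already done the work, and the only thing one must be careful about is guaranteeing the existence of a preimage of a point with diagonalizable first coordinate, which is exactly what surjectivity (equivalently, transitivity of $G$ on $\C_m$) provides.
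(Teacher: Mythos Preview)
Your proof is correct and is essentially identical to the paper's own argument: both use transitivity of the $G$-action on $\C_m$ to get surjectivity of $f$, pick a point $(P,Q)$ with $P$ diagonalizable, and then invoke Lemma~\ref{diag} together with the distinctness of the eigenvalues of $P$ to obtain the contradiction $m\le n$. The only differences are expository (you spell out the surjectivity step and the eigenvalue count a bit more explicitly).
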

\begin{proof}
Choose $\, (P,Q) \in \C_m \,$ with $\, P \,$ diagonalizable.  Since 
$\, \C_m \,$ is just one $\, G$-orbit, $\, f \,$ is surjective, so we 
can choose $\, (X,Y) \in \C_n \,$ with $\, f(X,Y) = (P,Q) \,$.  But then 
Lemma~\ref{diag} says that $\, X \,$ is an $\, n \times n \,$ matrix 
with more than $\,n \,$ distinct eigenvalues, which is impossible.
\end{proof}

\section{The base-point}
\label{base}

A useful subgroup of $\, G \,$ 
is the group $\, R \,$ of {\it scaling transformations}, defined by
$$
R_{\lambda}(x) = \lambda x \,, \ R_{\lambda}(y) = \lambda^{-1} y 
\quad (\lambda \in \c^{\times}) \,.
$$
It acts on $\, \C_n \,$ in a similar way:
\begin{equation}
R_{\lambda}(X,Y) =  (\lambda^{-1} X,\, \lambda Y) \,.
\end{equation}
\begin{lemma}
\label{scale}
Suppose that the  conjugacy class $\, (X,Y) \in \C_n \,$ is fixed by
the group $\, R \,$.  Then $\, X \,$ and $\, Y \,$ are both nilpotent.
\end{lemma}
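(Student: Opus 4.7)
The plan is to use the fact that conjugate matrices share the same spectrum, applied to each member of the one-parameter family of scalings.

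Unpacking the hypothesis, the conjugacy class $(X,Y)$ being fixed by $R$ means that for every $\lambda \in \c^{\times}$ there exists an invertible matrix $g_{\lambda}$ such that
\[
g_{\lambda}\, X\, g_{\lambda}^{-1} = \lambda^{-1} X, \qquad g_{\lambda}\, Y\, g_{\lambda}^{-1} = \lambda Y.
\]
The first equation alone tells us that $X$ and $\lambda^{-1}X$ have the same set of eigenvalues for every $\lambda \in \c^{\times}$. Thus the (finite) spectrum of $X$ is invariant under the whole multiplicative group $\c^{\times}$ acting by $\mu \mapsto \lambda^{-1}\mu$. The only nonempty finite subset of $\c$ with that property is $\{0\}$, so every eigenvalue of $X$ is zero, i.e.\ $X$ is nilpotent.

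The same argument applied to the second equation shows that $Y$ is nilpotent, which completes the proof. There is really no obstacle here; the only point worth noting is that one uses the $\lambda$-dependence of $g_{\lambda}$ purely to conclude spectral equality, so no joint information about $X$ and $Y$ is needed for this particular lemma.
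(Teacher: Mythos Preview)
Your proof is correct and is essentially the same as the paper's: both arguments observe that conjugacy preserves the spectrum, so the eigenvalue set of $X$ (respectively $Y$) is a finite subset of $\c$ stable under multiplication by every $\lambda \in \c^{\times}$, hence equal to $\{0\}$. The only cosmetic difference is that the paper phrases it for a single eigenvalue $\mu$ (and treats $Y$ first) rather than the whole spectrum at once.
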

\begin{proof}
Let $\, \mu \,$ be an eigenvalue of (say) $\, Y \,$.  Then 
for any $\, \lambda \in \c^{\times} \,$,  
$\, \lambda \mu \,$ is an eigenvalue of $\, \lambda Y \,$, which is 
(by hypothesis) conjugate to $\, Y \,$.  Thus  
$\, \lambda \mu \,$ is an eigenvalue of $\, Y \,$ for every 
$\, \lambda \in \c^{\times} \,$, which is impossible unless $\, \mu = 0 \,$. 
Hence all eigenvalues of $\, Y \,$ must be $\, 0 \,$, that is, 
$\, Y \,$ must be nilpotent.  The same argument applies to $\, X \,$.
\end{proof}

The converse to Lemma~\ref{scale} is also true, but we shall use that 
fact only for the pair $\, (X_0, Y_0) \,$ given by
\begin{equation}
X_0 \,=\, \left(
\begin{array}{ccccc}
0 & 0 &  0 & \ldots & 0\\
1 & 0 &  0 & \ldots & 0\\
0 & 2 &  0 & \ddots & \vdots \\
\vdots & \vdots & \ddots & \ddots & 0 \\
0 & 0 & \ldots & n-1 & 0
\end{array}
\right)\ , \quad
Y_0 \, = \, \left(
\begin{array}{ccccc}
0 &  1 &     0  & \ldots & 0\\
0 &    0  &   1 & \ldots & 0\\
0 &    0  &    0  & \ddots & \vdots\\
\vdots & \vdots & \ddots & \ddots & 1 \\
0 & 0  & \ldots & 0 & 0
\end{array}
\right)\ .
\end{equation}
We shall regard $\, (X_0, Y_0) \,$ as the {\it base-point} in $\, \C_n \,$.
In the rather trivial case $\, n=1 \,$, we have $\, \C_1 = \c^2 $, and 
we interpret $\, (X_0, Y_0) \,$ as $\, (0,0) \,$.
\begin{lemma}
\label{scal}
The (conjugacy class of) the 
pair $\, (X_0, Y_0)  \in \C_n \,$ is fixed by the group $\, R \,$.
\end{lemma}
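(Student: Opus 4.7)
The plan is to exhibit, for each $\lambda \in \c^\times$, an explicit invertible matrix $A(\lambda)$ that conjugates $(X_0, Y_0)$ to $(\lambda^{-1} X_0,\, \lambda Y_0)$; this is exactly what it means for the conjugacy class to be fixed by $R_\lambda$. Both $X_0$ and $Y_0$ act on the standard basis in a graded fashion---$Y_0$ shifts the index by $+1$ while the nonzero entries of $X_0$ sit on the subdiagonal---so it is natural to look for $A$ diagonal.

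Writing $A = \diag(a_1, \ldots, a_n)$, the condition $A Y_0 A^{-1} = \lambda Y_0$ read on the superdiagonal forces $a_i / a_{i+1} = \lambda$, so up to an irrelevant overall scalar one is led to $A(\lambda) = \diag(1, \lambda^{-1}, \lambda^{-2}, \ldots, \lambda^{-(n-1)})$. A one-line check on the subdiagonal entries of $X_0$ then shows that this same $A(\lambda)$ automatically satisfies $A X_0 A^{-1} = \lambda^{-1} X_0$, because the factor $a_{i+1}/a_i$ equals $\lambda^{-1}$ for every $i$. The trivial case $n = 1$, where $(X_0, Y_0) = (0,0) \in \c^2$, is of course fixed by everything.

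There is no real obstacle; the whole argument is a single guess-and-verify step, and the only thing worth emphasising is that the \emph{same} diagonal $A(\lambda)$ works simultaneously for $X_0$ and $Y_0$. This reflects the fact that $(X_0, Y_0)$ is a weight-graded representative of its $R$-orbit, a feature that is presumably what makes it a useful base point in the subsequent analysis.
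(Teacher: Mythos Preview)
Your proof is correct and is essentially identical to the paper's own argument: the paper conjugates by $d(\lambda)=\diag(\lambda,\lambda^2,\ldots,\lambda^n)$, which differs from your $A(\lambda)=\diag(1,\lambda^{-1},\ldots,\lambda^{-(n-1)})$ only by the irrelevant overall scalar you mention (indeed $d(\lambda)^{-1}=\lambda^{-1}A(\lambda)$). The verification on the sub- and superdiagonal entries is the same in both.
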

\begin{proof}
For $\, \lambda \in \c^{\times} \,$, let $ \, d(\lambda) \,$ be the diagonal 
matrix
$$
d(\lambda) := \diag(\lambda, \lambda^2, \ldots, \lambda^n) \,.
$$
Then $\, d(\lambda)^{-1} X d(\lambda) = \lambda^{-1} X \,$ 
and $\, d(\lambda)^{-1} Y d(\lambda) = \lambda Y \,$.
\end{proof}
\begin{corollary}
\label{cor}
Let $\, f : \C_n \to \C_m \,$ be a $\, G$-map, and let 
$\, f(X_0, Y_0) = (P,Q) \,$.  Then $\, P \,$ and $\, Q \,$ are nilpotent.
\end{corollary}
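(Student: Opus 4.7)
The plan is to chain together the two preceding lemmas via the equivariance hypothesis. The key observation is that the scaling group $R$ is a subgroup of $G$ (the transformations $R_\lambda$ are clearly $\c$-algebra automorphisms of $A$, since they preserve the defining relation $[x,y]=1$), so any $G$-map is in particular $R$-equivariant.

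First I would invoke Lemma~\ref{scal}, which tells us that the base-point $(X_0,Y_0) \in \C_n$ is a fixed point of the $R$-action. Then, since $f$ is $G$-equivariant and $R \subset G$, for every $\lambda \in \c^\times$ we have
$$
R_\lambda \cdot (P,Q) \, = \, R_\lambda \cdot f(X_0,Y_0) \, = \, f(R_\lambda \cdot (X_0,Y_0)) \, = \, f(X_0,Y_0) \, = \, (P,Q) \,.
$$
So the conjugacy class $(P,Q) \in \C_m$ is also fixed by the whole group $R$. Finally, applying Lemma~\ref{scale} to $(P,Q)$ gives that both $P$ and $Q$ are nilpotent, which is exactly what we want.

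There is really no obstacle here; the corollary is essentially the statement that equivariant maps send $R$-fixed points to $R$-fixed points, combined with the characterization of such fixed points given by Lemma~\ref{scale}. The only small thing worth verifying is that the scaling automorphisms genuinely lie in $G$, but this is immediate from the definition of $G$ as the full group of $\c$-automorphisms of the Weyl algebra.
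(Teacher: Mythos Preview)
Your proof is correct and follows exactly the same approach as the paper: combine Lemma~\ref{scal} (the base-point is $R$-fixed) with the fact that a $G$-map preserves fixed-point sets of subgroups, then apply Lemma~\ref{scale}. The paper's own proof is the one-sentence version of what you wrote.
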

\begin{proof}
This follows at once from Lemmas~\ref{scale} and \ref{scal}, since 
a $\,G$-map must respect the fixed point set of any subgroup 
of $\, G \,$. 
\end{proof}

\section{Proof of Theorem~\ref{T} in the case $\, n > m \,$}

The remaining parts of the proof use the following trivial fact.
\begin{lemma}
\label{ppp}
Let $\, (X,Y) \in \C_n \,$, let $\, p \,$ be any polynomial, and let 
$\, \chi \,$ be divisible by the minimum 
polynomial of $\, X + p(Y) \,$.  Then the 
automorphism $\, \Phi_{-p} \Psi_{\chi} \Phi_p \,$ fixes $\, (X,Y) \,$.
\end{lemma}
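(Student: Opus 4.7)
The plan is to verify the statement by direct computation, tracking how each of the three factors $\Phi_p$, $\Psi_\chi$, $\Phi_{-p}$ acts on the pair $(X,Y)$ in turn. Since the lemma is explicitly described as a trivial fact, the main content is just to observe that the middle factor acts trivially because of the divisibility hypothesis on $\chi$, while the two outer factors cancel.

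Concretely, I would first apply $\Phi_p$ to get $(X+p(Y),\,Y)$. Then I would apply $\Psi_\chi$, which produces
\[
(X+p(Y),\ Y + \chi(X+p(Y))).
\]
At this point I invoke the hypothesis: since $\chi$ is a polynomial multiple of the minimum polynomial of the matrix $X+p(Y)$, we have $\chi(X+p(Y)) = 0$ (this is where Cayley-Hamilton, or rather the definition of minimum polynomial, is used). Hence $\Psi_\chi$ has acted as the identity on this particular point. Finally, $\Phi_{-p}$ subtracts $p(Y)$ from the first coordinate and restores the original $(X,Y)$.

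There is essentially no obstacle here: the computation is three lines long and the only nontrivial ingredient is the standard fact that the minimum polynomial annihilates its matrix. The lemma's role is to be used repeatedly in the remaining sections as a source of elements of $G$ that fix a prescribed point, so the value of the statement lies in its application rather than its proof. The one point worth noting for clarity is the sign convention: $\Phi_{-p}$ means $\Phi_p$ with parameter $-p$, i.e.\ the inverse of $\Phi_p$, which is why the outer conjugation cancels out.
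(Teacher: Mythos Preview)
Your proof is correct and is essentially identical to the paper's own proof: both carry out the same three-step computation $\Phi_p$, then $\Psi_\chi$ (which acts trivially because $\chi(X+p(Y))=0$), then $\Phi_{-p}$.
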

\begin{proof}
Since $\, \chi(X + p(Y)) = 0 \,$ we have 
\begin{eqnarray}
\Phi_{-p} \Psi_{\chi} \Phi_p(X,\, Y) 
&=& \Phi_{-p} \Psi_{\chi}(X + p(Y),\, Y) \nonumber \\
&=& \Phi_{-p}(X + p(Y),\, Y) \nonumber \\
&=& (X, \,Y)\ , \nonumber
\end{eqnarray}
as claimed.
\end{proof}
\begin{proposition}
\label{n>m}
If $\, n > m > 0 \,$ there is no $\, G$-map 
$\, f : \C_n \to \C_m \,$.
\end{proposition}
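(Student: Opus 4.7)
My plan is to apply Lemma~\ref{ppp} with a single carefully chosen polynomial $p$, producing an automorphism $\phi\in G$ that fixes $(X_0,Y_0)\in\C_n$ but moves every point of $\C_m$ whose two matrices are both nilpotent.  Since Corollary~\ref{cor} tells us that $f(X_0,Y_0)=(P,Q)$ has both $P$ and $Q$ nilpotent, the $G$-equivariance of $f$ will then yield the required contradiction.

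The choice I would try is $p(y)=y^{n-1}$.  With this $p$, the matrix $X_0+p(Y_0)=X_0+Y_0^{n-1}$ has a particularly simple shape: its only nonzero entries are the subdiagonal entries $1,2,\ldots,n-1$ inherited from $X_0$ and a single $1$ in the top-right corner coming from $Y_0^{n-1}$.  Expanding the determinant along the first row, I expect the characteristic polynomial to work out to $\chi(\lambda)=\lambda^n-(n-1)!$, whose $n$ roots (the $n$-th roots of the nonzero number $(n-1)!$) are all distinct; hence $\chi$ is also the minimum polynomial of $X_0+p(Y_0)$.  By Lemma~\ref{ppp}, $\phi:=\Phi_{-p}\Psi_\chi\Phi_p$ then lies in the isotropy group of $(X_0,Y_0)$, and so, by $G$-equivariance, also in that of $(P,Q)$.

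Next I compute $\phi(P,Q)$, using $n-1\geq m$ together with the nilpotency of $P$ and $Q$.  The step $\Phi_p$ is trivial because $Q^{n-1}=0$; the step $\Psi_\chi$ replaces $Q$ by $Q-(n-1)!\,I$ because $P^n=0$; and finally $\Phi_{-p}$ subtracts $(Q-(n-1)!\,I)^{n-1}$ from $P$.  A binomial expansion gives $(Q-(n-1)!\,I)^{n-1}=(-(n-1)!)^{n-1}\,I+N$, where $N$ is a linear combination of $Q,Q^2,\ldots,Q^{m-1}$ (higher powers vanish) and hence has trace zero.  Thus the first coordinate of $\phi(P,Q)$ has trace $-m(-(n-1)!)^{n-1}\neq 0$, whereas $\tr P=0$, so $\phi(P,Q)$ cannot be conjugate to $(P,Q)$ in $\C_m$---contradicting the fact that $\phi$ fixes $(P,Q)$.

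The main obstacle I foresee is the closed-form computation of the characteristic polynomial of $X_0+Y_0^{n-1}$; once that is done, the rest of the argument is a routine binomial expansion coupled with the trivial observations that nilpotent matrices have trace zero and that $(n-1)!\neq 0$ for $n\geq 2$.
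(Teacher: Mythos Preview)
Your argument is correct and follows the paper almost verbatim: the same choice $p(t)=t^{n-1}$, the same characteristic polynomial $\chi(t)=t^n-(n-1)!$, and the same automorphism $\Phi_{-p}\Psi_\chi\Phi_p$ fixing $(X_0,Y_0)$ while failing to fix $(P,Q)$.  The only difference is in the last step: you extract the contradiction from the \emph{first} coordinate via a binomial expansion and a trace computation, whereas the paper simply observes that the \emph{second} coordinate is $Q-(n-1)!\,I$, which cannot be conjugate to the nilpotent $Q$ because its eigenvalues are all $-(n-1)!$ --- so the binomial expansion is unnecessary.
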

\begin{proof}
We apply  Lemma~\ref{ppp} to the base-point $\, (X_0,Y_0) \in \C_n \,$, 
with $\, p(t) = t^{n-1} \,$.  The minimum ($=$ characteristic) 
polynomial of $\, X_0 + Y_0^{n-1} \,$ is 
\begin{equation}
\label{char}
\chi(t) := \det(tI - X_0 - Y_0^{n-1}) = t^n - (n-1)! \ .
\end{equation}
Now suppose that $\, f : \C_n \to \C_m \,$ is a $\, G$-map, 
and let $\, f(X_0,Y_0) = (P,Q) \,$: according to Corollary~\ref{cor}, 
$\, P \,$ and $\, Q \,$ are 
nilpotent.  They are of size less than $\, n \,$, 
so we have $\, P^{n-1} = Q^{n-1} = 0 \,$. 
Thus
\begin{eqnarray}
\Phi_{-p} \Psi_{\chi} \Phi_p(P,\, Q) 
&=& \Phi_{-p} \Psi_{\chi}(P + Q^{n-1},\, Q) \nonumber \\
&=& \Phi_{-p} \Psi_{\chi}(P,\, Q) \nonumber \\
&=& \Phi_{-p}(P,\, Q + P^n - (n-1)! I) \nonumber \\
&=& \Phi_{-p}(P,\, Q - (n-1)! I) \nonumber \\
&=& (\text{something},\, Q - (n-1)! I)\ . \nonumber
\end{eqnarray}
Now, $\, Q - (n-1)!I \,$ is not conjugate 
to $\, Q \,$ (because their eigenvalues are different), 
hence $\, \Phi_{-p} \Psi_{\chi} \Phi_p \,$ 
does not fix $\, (P,Q) \,$.  So by Lemma~\ref{ppp}, the isotropy 
group of $\, (X_0,Y_0) \,$ is not contained in the isotropy 
group of $\, f(X_0,Y_0) \,$.  This contradiction shows that $\, f \,$ 
does not exist.
\end{proof}

\section{Proof of Theorem~\ref{T} in the case $\, n = m \,$}

It remains to show that there is no nontrivial $\, G$-map from 
$\, \C_n \,$ to itself. Note that because $\, \C_n \,$ is a single orbit, 
any such map must be bijective, and must map each point of $\, \C_n \,$ 
to a point with {\it the same} isotropy group.
In the case $\, n=1 \,$ the result follows (for 
example) from Lemma~\ref{diag}, so from now on we shall assume that 
$\, n \geq 2 \,$.  Let  
$\, f : \C_n \to \C_n \,$ be a $\, G$-map, and let 
$\, f(X_0, \, Y_0) = (P,Q) \,$. Again,  Corollary~\ref{cor} says 
that $\, P \,$ and $\, Q \,$ are nilpotent. 
We aim to show that $\, (P,Q) \,$ 
can only be $\, (X_0, \, Y_0) \,$, whence $\, f \,$ is the identity. 
We remark first 
that if $\, Q^{n-1} = 0 \,$, then the calculation in the proof of 
Proposition~\ref{n>m} still gives a contradiction; thus the Jordan 
form of $\, Q \,$ consists of just one block, so we may assume that 
$\, Q = Y_0 \,$. Now, it is not hard to classify all the points 
$\, (X,\, Y_0) \in \C_n \,$ with $\, X \,$ nilpotent (see \cite{W}, 
p.26 for the elementary argument): there are exactly $\, n \,$ of them, 
and they all have the form $\, (X(\boldsymbol{a}),\, Y_0) \,$, where 
$\, \boldsymbol{a} := (a_1, \ldots, a_{n-1}) \,$ and 
$\, X(\boldsymbol{a}) \,$ denotes the subdiagonal matrix 
\begin{equation}
X(\boldsymbol{a}) \,=\, \left(
\begin{array}{ccccc}
0 & 0 &  0 & \ldots & 0\\
a_1 & 0 &  0 & \ldots & 0\\
0 & a_2 &  0 & \ddots & \vdots \\
\vdots & \vdots & \ddots & \ddots & 0 \\
0 & 0 & \ldots & a_{n-1} & 0
\end{array}
\right)\ .
\end{equation}
The possible vectors $\, \boldsymbol{a} \,$ that 
give points of $\, \C_n \,$ are
\begin{equation}
\label{a}
\boldsymbol{a} = (1,2, \ldots, r-1; -(n-r), \ldots, -2, -1)
\quad \text{for} \quad 1 \leq r \leq n \,
\end{equation}
(so $\, r=n \,$ gives $\, X_0 \,$).  Thus so far we have shown that 
$\, f(X_0, \,Y_0) \,$ must be one of these points 
$\, (X(\boldsymbol{a}),\, Y_0) \,$.  To finish the argument, we need 
the following easy calculations of characteristic polynomials 
(the first generalizes \eqref{char}):
\begin{equation}
\label{1}
\det(tI - X(\boldsymbol{a}) - Y_0^{n-1}) \,=\, 
t^n - \prod_1^{n-1} a_i
\ ;%
\end{equation}
\begin{equation}
\label{2}
\det(tI - X(\boldsymbol{a}) - Y_0^{n-2}) \,=\, 
t^n - (\prod_1^{n-2} a_i + \prod_2^{n-1} a_i)\, t \ ,
\end{equation}
where the last formula holds only for $\, n \geq 3 \,$.
If $\, \boldsymbol{a} \,$ is one of the vectors 
\eqref{a} with $\, 1 < r < n \,$, then the right hand side of 
\eqref{2} is just $\, t^n \,$; that is, 
$\, X(\boldsymbol{a}) + Y_0^{n-2} \, $ is nilpotent. In fact it is easy 
to check that the pair $\, (X(\boldsymbol{a}) + Y_0^{n-2}, \,Y_0) \,$
is conjugate to  $\, (X(\boldsymbol{a}), \,Y_0) \,$; that is, the 
map $\, (X, Y) \mapsto (X + Y^{n-2}, Y) \,$ fixes 
$\, (X(\boldsymbol{a}), \,Y_0) \,$.  It does not fix $\, (X_0, \, Y_0) \,$, 
so $\, f(X_0, \, Y_0) \,$ cannot be any of these points 
$\, (X(\boldsymbol{a}), \,Y_0) \,$.  It remains only to see that $\, f \,$ 
cannot map $\, (X_0, \, Y_0) \,$ to the pair 
corresponding to $\, r=1 \,$ in \eqref{a}:
let us call it $\, (X_1, \, Y_0) \,$.  

If $\, n \,$ is {\it even} we use \eqref{1}:  the characteristic 
polynomial of $\, X_0 + Y_0^{n-1} \,$ is $\, \chi(t) := t^n - (n-1)! \,$
while the characteristic 
polynomial of $\, X_1 + Y_0^{n-1} \,$ is $\, t^n + (n-1)! \,$, so that 
$\, \chi(X_1 + Y_0^{n-1}) = -2(n-1)! I \,$. We now apply 
Lemma~\ref{ppp} with $\, p(t) = t^{n-1} \,$. According to that lemma, 
the map 
$\, \Phi_{-p} \Psi_{\chi} \Phi_p \,$ fixes $\, (X_0, \, Y_0) \,$; 
on the other hand
\begin{eqnarray}
\Phi_{-p} \Psi_{\chi} \Phi_p(X_1,\, Y_0) 
&=& \Phi_{-p} \Psi_{\chi}(X_1 + Y_0^{n-1},\, Y_0) \nonumber \\
&=& \Phi_{-p}(X_1 + Y_0^{n-1}, \, Y_0 - 2(n-1)! I) \nonumber \\
&=& (\text{something},\, Y_0 - 2(n-1)! I) \ . \nonumber
\end{eqnarray}
Since $\, Y_0 - 2(n-1)! I \,$ is not conjugate to $\, Y_0 \,$, 
this shows that $\, \Phi_{-p} \Psi_{\chi} \Phi_p \,$ does not fix 
$\, (X_1, \, Y_0) \,$.  Thus in this case $\, f(X_0, \, Y_0) \,$ 
cannot be equal to $\, (X_1, \, Y_0) \,$

Finally, if $\, n \,$ is {\it odd}, we have a 
similar calculation using \eqref{2}.  Setting 
$\, \alpha := (n-1)! + (n-2)! \,$, the characteristic polynomial 
of $\, X_0 + Y_0^{n-2} \,$ is $\, \chi(t) := t^n - \alpha t \,$
while the characteristic 
polynomial of $\, X_1 + Y_0^{n-2} \, $ is $\, t^n + \alpha t \,$,
so that 
$\, \chi(X_1 + Y_0^{n-2}) = -2 \alpha (X_1 + Y_0^{n-2}) \,$. 
We now apply 
Lemma~\ref{ppp} with $\, p(t) = t^{n-2} \,$. The map 
$\, \Phi_{-p} \Psi_{\chi} \Phi_p \,$ fixes $\, (X_0, \, Y_0) \,$; 
on the other hand
\begin{eqnarray}
\Phi_{-p} \Psi_{\chi} \Phi_p(X_1,\, Y_0) 
&=& \Phi_{-p} \Psi_{\chi}(X_1 + Y_0^{n-2},\, Y_0) \nonumber \\
&=& \Phi_{-p}(X_1 + Y_0^{n-2}, \, Y_0 - 2 \alpha (X_1 + Y_0^{n-2})) \nonumber \\
&=& (\text{something},\, Y_0 - 2 \alpha (X_1 + Y_0^{n-2})) \ . \nonumber
\end{eqnarray}
The matrix $\, Y_0 - 2 \alpha (X_1 + Y_0^{n-2}) \,$ is not 
nilpotent, for example because its square does not have trace zero.  
Hence $\, \Phi_{-p} \Psi_{\chi} \Phi_p \,$ does not fix 
$\, (X_1, \, Y_0) \,$, and the proof is finished.
\section{Other formulations of Theorem~\ref{T}}
\label{trivial}
The remarks in this section are at the level of ``groups acting on sets'':
that is, we may as well suppose that 
$\, \R  \,$ denotes any set acted 
on by a group $\, G \,$.  We are interested in the condition
\begin{equation}
\label{simple}
\text{there is no nontrivial $\,G$-map \ } f : \R \to \R \ 
\end{equation}
(``nontrivial'' means ``not the identity map'').  As we observed above, that is 
equivalent to the two conditions
\begin{subequations}
\label{simp1}
\begin{equation}
\label{simpi}
\text{each $\,G$-orbit in $\R$ satisfies \eqref{simple}};
\end{equation}

\vspace*{-10mm}

\begin{equation}
\label{simpii}
\text{ if  $\, O_1 \,$ and $\, O_2 \,$ are distinct orbits, there is no 
$\,G$-map from $\, O_1 \,$ to $\, O_2 \,$.}
\end{equation}
\end{subequations}
%
%
Let us reformulate these conditions in terms of the isotropy groups 
$\, G_M \,$ of the points $\, M \in \R \,$. 
If $\, H \,$ and $\, K \,$ are subgroups of $\, G \,$, then any $\, G$-map 
from $\, G/H \,$ to  $\, G/K \,$ to must have the form 
$\,  \varphi(gH) = g(xK) \,$ for some $\, x \in G \,$.  This is well-defined 
if and only if we have 
$$
x^{-1}Hx \subseteq K \ .
$$ 
In the case $\, H = K \,$, that says that $\, x \in N_G(H) \,$, where 
$\, N_G \,$ denotes the normalizer in $\, G \,$: it follows that the 
$\, G$-maps from $\, G/H \,$ to itself correspond  1--1 to the points of 
$\, N_G(H)/H \,$.  Thus the conditions \eqref{simp1} are equivalent to 
\begin{subequations}
\label{simp2}
\begin{equation}
\label{simpa}
\text{for any $\,M \in \R \,$}, \text{\, we have \,} G_M = N_G(G_M)\ ;
\end{equation} 

\vspace*{-10mm}

\begin{equation}
\label{simpb}
\text{if \,} M \text{\, and \,} N  \text{\, are on different orbits, 
no conjugate of \,} G_M 
\text{ \,is in \,} G_N \ . 
\end{equation}
\end{subequations}
Finally, we note that the conditions \eqref{simp2} are equivalent to the 
single assertion
\begin{equation}
\label{simp3}
\text{if \,} G_M \subseteq  G_N \,, \text{ then \,} M = N\ . 
\end{equation}
Indeed, suppose \eqref{simp3} holds, and let $\, x \in N_G(G_M) \,$, 
that is, $\, xG_Mx^{-1} \subseteq G_M \,$, or $\, G_{xM} \subseteq G_M \,$. 
By \eqref{simp3}, we then have $\, xM = M \,$, that is, $\, x \in G_M \,$. 
Thus  \eqref{simp3} $\Rightarrow$ \eqref{simpa}.  Now, if \eqref{simpb} 
is false, we have $\, xG_Mx^{-1} \subseteq G_N \,$, that is, 
$\, G_{xM} \subseteq G_N \,$, for some $\, x \in G \,$ and some 
$\, M,N \,$ on different orbits.  But since they are on different orbits, 
$\, xM \neq N \,$, so \eqref{simp3} is false. Thus 
\eqref{simp3} $\Rightarrow$ \eqref{simpb}.

Conversely, suppose \eqref{simp2} holds, and let $\, M,N \,$ be such that 
$\, G_M \subseteq G_N \,$.  By \eqref{simpb}, $\, M \,$ and $\, N \,$ are 
on the same orbit, so $\, M = xN \,$ for some $\, x \in G \,$; hence 
$\, G_{M} = xG_Nx^{-1} \subseteq G_N \,$.  Thus $\, x \in N_G(G_N) \,$, 
so by \eqref{simpa}, $\, x \in G_N \,$: hence $\, M=N \,$, as desired.

It is in the form \eqref{simp3} that our result is stated in \cite{K}.

\scriptsize
\noindent
\textbf{Acknowledgments}.  I thank M.\ K.\ Kouakou for kindly allowing me 
see his unpublished work \cite{K}.  The main part of this paper was written 
in 2006, when I was a participant in the programme on Noncommutative 
Geometry at the Newton Institute, Cambridge; the support of EPSRC Grant 
531174 is gratefully acknowledged.
\normalsize

%

\bibliographystyle{amsalpha}

\end{document}